\documentclass[12pt,oneside,english]{amsart}

\usepackage[T1]{fontenc}
\usepackage[utf8]{inputenc}
\usepackage[a4paper]{geometry}
\geometry{verbose,tmargin=2cm,bmargin=2cm,lmargin=2.5cm,rmargin=2.5cm}
\synctex=-1
\usepackage{babel}
\usepackage{mathtools}
\usepackage{amstext}
\usepackage{amsthm}
\usepackage{amssymb}
\usepackage[all]{xy}
\usepackage[unicode=true,
 bookmarks=true,bookmarksnumbered=false,bookmarksopen=false,
 breaklinks=false,pdfborder={0 0 1},backref=false,colorlinks=false]
 {hyperref}
\hypersetup{pdftitle={Roe functors preserve homotopies}}

\makeatletter
\numberwithin{equation}{section}
\numberwithin{figure}{section}
\theoremstyle{plain}
\newtheorem{thm}{\protect\theoremname}[section]
\theoremstyle{definition}
\newtheorem{defn}[thm]{\protect\definitionname}
\theoremstyle{remark}
\newtheorem{rem}[thm]{\protect\remarkname}
\theoremstyle{plain}
\newtheorem{lem}[thm]{\protect\lemmaname}

\tolerance=10000
\binoppenalty=32000
\relpenalty=32000

\pagestyle{empty}

\usepackage{graphicx}

\usepackage[right]{lineno}
\usepackage[shortcuts]{extdash}
\usepackage{enumitem}

\usepackage[x11names]{xcolor}
\definecolor{cite_color}{rgb}{0.00, 0.55, 0.00}
\definecolor{link_color}{rgb}{0.55, 0.0, 0.0}

\hypersetup{
	pdfborder=0 0 0.1
}

\makeatother

\providecommand{\definitionname}{Definition}
\providecommand{\lemmaname}{Lemma}
\providecommand{\remarkname}{Remark}
\providecommand{\theoremname}{Theorem}

\newcommand{\norm}[1]{\left\Vert #1\right\Vert }
\newcommand{\ev}{\operatorname{ev}}
\newcommand{\mat}{\operatorname*{mat}}
\newcommand{\dist}{\operatorname{dist}}
\newcommand{\prop}{\operatorname{prop}}
\newcommand{\id}{\operatorname{id}}
\newcommand{\Id}{\operatorname{Id}}
\newcommand{\itId}{\mathit{Id}}
\newcommand{\Rot}{\operatorname{Rot}}
\newcommand{\close}{\sim_{cl}}
\newcommand{\crsh}{\simeq_{crs}}
\newcommand{\EFC}{\mathtt{EFC}}
\newcommand{\GEFC}{\mathtt{GEFC}}
\newcommand{\hGEFC}{\mathtt{hGEFC}}
\newcommand{\DMBG}{\mathtt{DMBG}}
\newcommand{\cDMBG}{\mathtt{cDMBG}}
\newcommand{\Cstar}{\mathtt{C^{*}}}

\author[G.S.~Makeev]{Georgii S.~Makeev}
\email{makeev.gs@gmail.com}

\date{}

\begin{document}

\title{Roe functors preserve homotopies}

\begin{abstract}
We show that coarse maps between countable metric spaces of bounded
geometry induce natural transformations of sufficiently good endofunctors
of $C^{*}$-algebras and prove that this correspondence is invariant
with respect to coarse homotopies.
\end{abstract}

\maketitle

\section{Introduction}

Many constructions involving homotopies of asymptotic homomorphisms
in the $E$-theory of Connes and Higson~\cite{connes-higson1990,GHT}
can be formulated at the level of endofunctors of $C^{*}$-algebras.
The category $\hGEFC$ introduced in~\cite{makeev_unsuspended} with
objects sufficiently good endofunctors of $C^{*}$-algebras happens
to be an appropriate category to describe this behavior.

Objects of the category $\hGEFC$ give generalized notions of homotopy
of $*$-homomorphisms. These homotopy relations can be used for constructing
an unsuspended picture of $E$-theory~\cite{makeev_unsuspended}.
They also give an $E$-theoretical analog of the extension groups
of $C^{*}$-algebras and can be used for computing $K$-homology.

In this work we study the relationship between discrete metric spaces
of bounded geometry with bounded coarse structure and Roe algebra
functors. We show that coarse maps induce morphisms in $\hGEFC$,
and this correspondence is invariant with respect to coarse homotopies.

\section{Roe algebras\label{sec:Definitions}}
\begin{defn}
A discrete metric space $X$ has \textit{bounded geometry}~\cite{roe_coarse_lectures}
if for every $R>0$ all the $R$-balls have uniformly bounded cardinalities,
i.e.
\[
\sup_{x\in X}\left|B_{R}(x)\right|<\infty.
\]
\end{defn}

We shall consider formal $X\textrm{-by-}X$-matrices with entries
in a $C^{*}$-algebra, and write $\mat_{x,y}b_{x,y}$ for the matrix
with elements $\{b_{x,y}\}_{x,y\in X}$. For such formal matrices
we can define the notion of \emph{propagation}:
\[
\prop\left(\mat_{x,y}b_{x,y}\right)\coloneqq\sup\left\{ \dist(x,y)\:\Big\vert\:x,y\in X,\:b_{x,y}\neq0\right\} .
\]

\begin{rem}
\label{rem:propmult}Note that propagation satisfies the following
property: 
\[
\prop(m_{1}m_{2})\leq\prop(m_{1})\prop(m_{2}).
\]
\end{rem}

Let $\mathcal{L}_{B}(l_{2}(X)\otimes B)$ be the $C^{*}$-algebra
of adjointable operators~\cite{lance1995} on the Hilbert $B$-module
$l_{2}(X)\otimes B$, and let 
\[
L(B,X)\coloneqq\left\{ m=\mat_{x,y}b_{x,y}\:\Big\vert\;b_{x,y}\in B,\;\prop(m)<\infty,\;\sup_{x,y\in X}\norm{b_{x,y}}<\infty\right\} .
\]
The two following lemmas are quite simple and can be proved by arguments
similar to those given in~\cite{makeev_2019}.
\begin{lem}
\label{lem:norm-ineq}Let $X$ be a discrete metric space of bounded
geometry, and let $m=\mat_{x,y\in X}b_{x,y}\in L(B,X)$. Then $m\in\mathcal{L}_{B}(l_{2}(X)\otimes B)$
with respect to the standard basis of $l_{2}(X)\otimes B$. Moreover,
there is $N>0$, which depends only on $X$ and $\prop(m)$, such
that the following inequality holds:
\[
\norm m\leq N\sup_{x,y\in X}\norm{b_{x,y}}.
\]
\end{lem}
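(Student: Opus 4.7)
The plan is to reduce to the special case of ``pseudo-permutation'' matrices having at most one nonzero entry in each row and each column, for which adjointability and the norm estimate are almost trivial.

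First, set $R\coloneqq\prop(m)$ and $K\coloneqq\sup_{x\in X}\left|B_{R}(x)\right|$, which is finite by bounded geometry. Since $b_{x,y}\neq 0$ forces $\dist(x,y)\leq R$, every row and every column of $m$ contains at most $K$ nonzero entries. Viewing the set of nonzero positions as the edges of a bipartite graph on two copies of $X$ with maximum degree at most $K$, I would apply (the countable version of) K\"onig's edge-coloring theorem to partition these edges into $K$ matchings. This produces a decomposition $m=m_{1}+\cdots+m_{K}$ in which each $m_{i}$ has at most one nonzero entry per row and per column.

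For each such $m_{i}$ with entries $c_{x,y}$, the action on a basis vector is $m_{i}(\delta_{y}\otimes\xi)=\delta_{\sigma(y)}\otimes c_{\sigma(y),y}\xi$ for an injectively-defined partial map $\sigma$ on $X$. A direct calculation of the $B$-valued inner product reduces to a sum of the pairwise disjoint terms $\xi_{y}^{*}c_{\sigma(y),y}^{*}c_{\sigma(y),y}\xi_{y}$ and gives $\langle m_{i}\eta,m_{i}\eta\rangle\leq M^{2}\langle\eta,\eta\rangle$ with $M\coloneqq\sup_{x,y}\norm{b_{x,y}}$, so $m_{i}$ extends to a bounded adjointable operator on $l_{2}(X)\otimes B$ with $\norm{m_{i}}\leq M$ and adjoint $\mat_{x,y}c_{y,x}^{*}$. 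Summing the $K$ pieces yields $\norm{m}\leq K\sup_{x,y}\norm{b_{x,y}}$, and since $K$ depends only on $X$ and $R=\prop(m)$, taking $N\coloneqq K$ finishes the proof.

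The main delicate point will be the edge-coloring step: one has to verify that a countable bipartite graph of maximum degree at most $K$ is $K$-edge-colorable. This is standard and follows either from a compactness argument applied to the finite K\"onig theorem or from a greedy construction along an exhaustion of $X$ by finite subsets. If one prefers to sidestep this issue entirely, an elementary ``local coloring'' of positions $(x,y)$ with $\dist(x,y)\leq R$ yields a coarser decomposition into at most $K^{2}$ matchings, which suffices with $N\coloneqq K^{2}$ in place of $K$.
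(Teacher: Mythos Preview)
Your argument is correct. The key steps --- bounding row and column supports by $K=\sup_{x}\lvert B_{R}(x)\rvert$, decomposing $m$ into at most $K$ (or $K^{2}$) partial-permutation summands, and bounding each summand by $M=\sup_{x,y}\lVert b_{x,y}\rVert$ via the elementary inequality $\xi^{*}c^{*}c\,\xi\le\lVert c\rVert^{2}\xi^{*}\xi$ in $B$ --- are all sound, and the adjoint of each summand is again of the same partial-permutation type, so adjointability is clear. Your fallback $K^{2}$-coloring (label $(x,y)$ by the pair consisting of the rank of $y$ in $B_{R}(x)$ and the rank of $x$ in $B_{R}(y)$) is a clean way to avoid the compactness argument for countable K\"onig.

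As for comparison: the paper does not actually prove this lemma. It simply remarks that the result is ``quite simple'' and refers to arguments in \cite{makeev_2019}. So there is no in-paper proof to compare against; your write-up would in fact fill that gap. A common alternative one sees in this setting is a Schur-type estimate (bounding $\lVert m\rVert$ by the geometric mean of the uniform row- and column-$\ell^{1}$ bounds, both equal to $KM$ here), which avoids any combinatorial decomposition at the cost of a slightly more delicate Cauchy--Schwarz computation in the Hilbert-module inner product. Your matching decomposition trades that analytic step for a combinatorial one and makes adjointability completely transparent; either route yields $N=K$.
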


\begin{defn}
\label{def:uRoe}We denote the norm closure of $L(B,X)$ in $\mathcal{L}_{B}(l_{2}(X)\otimes B)$
by $\mathfrak{M}_{X}^{u}B$, and call it the \textit{uniform Roe algebra}
of $X$ with coefficients in~$B$.
\end{defn}

\begin{lem}
$\mathfrak{M}_{X}^{u}$ is an endofunctor of the category of $C^{*}$-algebras
and $*$-homomorphisms.
\end{lem}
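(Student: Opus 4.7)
The plan is to construct the action on morphisms first on the dense $*$-subalgebra $L(B,X)$ and then extend it by continuity, using an ambient induction on Hilbert modules to supply the necessary norm bound. Given a $*$-homomorphism $\phi\colon B\to B'$, I would first define $\phi_{*}\colon L(B,X)\to L(B',X)$ by entrywise application, $\phi_{*}(\mat_{x,y}b_{x,y})\coloneqq\mat_{x,y}\phi(b_{x,y})$. Propagation is preserved, and since $*$-homomorphisms of $C^{*}$-algebras are contractive, one has $\sup_{x,y}\norm{\phi(b_{x,y})}\leq\sup_{x,y}\norm{b_{x,y}}<\infty$, so $\phi_{*}(m)$ indeed lies in $L(B',X)$. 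The algebraic verifications that $\phi_{*}$ preserves products and adjoints are routine: bounded geometry together with finite propagation makes every entry of a product a finite sum over which $\phi$ passes term by term.

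The essential difficulty is to upgrade $\phi_{*}$ to a contraction with respect to the operator norms inherited from $\mathcal{L}_{B}(l_{2}(X)\otimes B)$ and $\mathcal{L}_{B'}(l_{2}(X)\otimes B')$; the estimate of Lemma~\ref{lem:norm-ineq} alone only gives a bound whose constant depends on $\prop(m)$, which cannot be used to pass to the closure. To obtain contractivity I would invoke the standard interior tensor product construction: viewing $B'$ as a Hilbert $B$-$B'$-bimodule via $\phi$, the induced map $T\mapsto T\otimes_{\phi}1$ is a $*$-homomorphism $\mathcal{L}_{B}(l_{2}(X)\otimes B)\to\mathcal{L}_{B'}(l_{2}(X)\otimes B')$ of $C^{*}$-algebras, hence automatically contractive. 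A direct computation on standard basis vectors shows that its restriction to $L(B,X)$ coincides with $\phi_{*}$, so $\phi_{*}$ extends uniquely by density to a $*$-homomorphism $\mathfrak{M}_{X}^{u}\phi\colon\mathfrak{M}_{X}^{u}B\to\mathfrak{M}_{X}^{u}B'$.

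Finally, both functoriality equalities $\mathfrak{M}_{X}^{u}\id_{B}=\id_{\mathfrak{M}_{X}^{u}B}$ and $\mathfrak{M}_{X}^{u}(\psi\circ\phi)=\mathfrak{M}_{X}^{u}\psi\circ\mathfrak{M}_{X}^{u}\phi$ are immediate on the dense subalgebra $L(B,X)$ from the entrywise description and propagate to the closures by continuity.
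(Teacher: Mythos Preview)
Your argument is correct; the paper itself omits the proof, merely remarking that this lemma and Lemma~\ref{lem:norm-ineq} ``can be proved by arguments similar to those given in~\cite{makeev_2019}''. Your identification of the key difficulty---that the constant in Lemma~\ref{lem:norm-ineq} depends on the propagation, so contractivity of $\phi_*$ cannot be extracted from it---and your solution via the interior tensor product $T\mapsto T\otimes_\phi 1$ is the standard and correct route.

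One small technical point: the identification $(l_2(X)\otimes B)\otimes_\phi B'\cong l_2(X)\otimes B'$ that you implicitly invoke holds on the nose only when $\overline{\phi(B)B'}=B'$; in general the interior tensor product embeds isometrically onto the submodule $l_2(X)\otimes\overline{\phi(B)B'}$. This is harmless---passing to unitizations $\tilde\phi\colon\tilde B\to\tilde B'$ makes the identification exact while leaving the relevant operator norms unchanged---but it is worth a sentence.
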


\begin{defn}
Let $\mathbb{K}$ be the functor of tensoring with the $C^{*}$-algebra
of compact operators in a separable Hilbert space. We call the composition
$\mathfrak{M}_{X}^{u}\mathbb{K}$ the \textit{Roe algebra functor
of X,} and denote it by~$\mathfrak{M}_{X}$.
\end{defn}

\section{Homotopies of natural transformations}

Denote by $\Cstar$ the category of $C^{*}$-algebras and $*$-homomorphisms,
and by $\EFC$ the category of endofunctors of $\Cstar$ and natural
transformations between them.

Let $\alpha\colon F\Rightarrow G$ and $\beta\colon G\Rightarrow H$
be two natural transformations. We denote their vertical composition
by $\beta\circ\alpha\colon F\Rightarrow H$. The horizontal composition
of $\alpha\colon F_{1}\Rightarrow F_{2}$ and $\beta\colon G_{1}\Rightarrow G_{2}$
will be denoted by $\alpha\beta\colon F_{1}G_{1}\Rightarrow F_{2}G_{2}$.

We write $\alpha B\colon FB\to GB$ for the component of $\alpha\colon F\Rightarrow G$
at object $B$. The symbol $\itId$ will denote the identity functor.
The expressions $c\in\in C$ and $f\in C$ will indicate respectively
that $c$ is an object and $f$ is an arrow of the category $C$.
\begin{defn}
\label{def:goodFinctor}We call $G\in\in\EFC$ a \textit{good endofunctor}
if the following properties hold:
\begin{enumerate}
\item \label{enu:h1}$G$ preserves epimorphisms;
\item \label{enu:h2}Let $\varphi_{1},\varphi_{2}\colon A\to B$ be two
$*$-homomorphisms, with $\varphi_{1}$ being an epimorphism; and
let 
\[
\xymatrix{B_{1}\oplus_{B}B_{2}\ar[d]_{p_{1}}\ar[r]^{p_{2}} & B_{2}\ar@{->>}[d]^{\varphi_{1}}\\
B_{1}\ar[r]_{\varphi_{2}} & B
}
\]
be the pullback diagram in $\Cstar$, where $p_{1}$, $p_{2}$ are
projections onto the corresponding components. Then the $*$-homomorphism
\[
G\left(B_{1}\underset{B}{\oplus}B_{2}\right)\to GB_{1}\underset{GB}{\oplus}GB_{2}
\]
induced by $Gp_{1}$ and $Gp_{2}$ is an isomorphism;
\item \label{enu:h3}There is a natural transformation $IG\Rightarrow GI$
making the following diagrams commute in~$\EFC$:
\[
\xymatrix{IG\ar@2[rr]\ar@2[rd]_{\ev_{j}G} &  & GI\ar@2[dl]^{G\ev_{j}}\\
 & G, &  &  & j=0,1,
}
\]
where $I$ is the functor of tensoring with $C[0,1]$, and where $\ev_{t}\colon I\Rightarrow\Id$
is the evaluation at~$t\in[0,1]$.
\end{enumerate}
\end{defn}

It is not difficult to prove that $I$, $\mathbb{K}$ and $\mathfrak{M}_{X}^{u}$
defined above are good endofunctors. Another important example which
we shall not, however, need further, but which provided motivation
for Definition~\ref{def:goodFinctor}, is the asymptotic algebra
functor $\mathfrak{A}$ from~\cite[Definition 1.1]{GHT}.

Good endofunctors and natural transformations form a category, which
we denote by $\GEFC$. It is easily verified that the composition
of two good endofunctors is again a good endofunctor. Hence, similarly
to $\EFC$, we can regard $\GEFC$ as a $2$-category~\cite{mclane-categories}.
Now we are ready to introduce homotopies of natural transformations
exposed in~\cite{makeev_unsuspended}.
\begin{defn}
We call natural transformations $\gamma_{0},\gamma_{1}\in\GEFC(F,G)$
\textit{homotopic} (written $\gamma_{0}\simeq\gamma_{1}$) if there
is a natural transformation $\gamma\colon F\Rightarrow GI$ such that
the following diagrams commute:
\[
\xymatrix{F\ar@2[dr]_{\gamma_{j}}\ar@2[r]^{\gamma} & GI\ar@2[d]^{G\ev_{j}}\\
 & G, & j=0,1.
}
\]
\end{defn}

One can check~\cite{makeev_unsuspended} that homotopy is an equivalence
relation, and that the vertical and horizontal compositions respect
homotopy. Hence, good endofunctors and homotopy classes of natural
transformations between them form a category, which we denote by $\hGEFC$. 
\begin{rem}
As the horizontal composition in $\hGEFC$ is well defined, we can
also regard $\hGEFC$ as a $2$-category.
\end{rem}

Consider the two natural transformations: a corner embedding $\iota\colon\Id\Rightarrow\mathbb{K}$
and a stability isomorphism $\theta\colon\mathbb{K}^{2}\Rightarrow\mathbb{K}$.
The proof of the following lemma is a standard argument, which can
be found for instance in~\cite{Jensen-Thomsen}.
\begin{lem}
\label{lem:corner}The diagram
\[
\xymatrix{\mathbb{K}\ar@{=}[dr]\ar@2[r]^{\iota\mathbb{K}} & \mathbb{K}^{2}\ar@2[d]^{\theta}\\
 & \mathbb{K}
}
\]
commutes up to homotopy.
\end{lem}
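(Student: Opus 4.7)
The plan is to construct a natural transformation $\gamma\colon\mathbb{K}\Rightarrow\mathbb{K}I$ in $\GEFC$ with $\mathbb{K}\ev_{0}\circ\gamma=\theta\circ(\iota\mathbb{K})$ and $\mathbb{K}\ev_{1}\circ\gamma=\id_{\mathbb{K}}$. Both endpoints act on an arbitrary $B$ by tensoring a fixed $*$-endomorphism of $\mathbb{K}$ with $\id_{B}$, so it suffices to produce a single $*$-homomorphism $\Gamma\colon\mathbb{K}\to\mathbb{K}\otimes C[0,1]$ realizing those two $*$-endomorphisms under evaluation at $t=0,1$; one then sets $\gamma B=\Gamma\otimes\id_{B}$, with naturality in $B$ automatic. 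Such a $\Gamma$ is equivalent to a point-norm continuous path of $*$-endomorphisms $\psi_{t}\colon\mathbb{K}\to\mathbb{K}$.

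Write $\mathbb{K}=\mathbb{K}(H)$ on a separable Hilbert space $H$, with $\iota$ given by a rank-one projection $e=|\xi_{0}\rangle\langle\xi_{0}|$ and $\theta$ implemented by a unitary $U\colon H\otimes H\to H$. Unwinding, one computes that $\theta\circ(\iota\mathbb{K})$ sends $k$ to $VkV^{*}$, where $V\colon H\to H$ is the isometry $V\eta=U(\xi_{0}\otimes\eta)$. The range of $V$ has infinite codimension, so $V$ is a proper isometry. The task thus reduces to constructing a point-norm continuous path $\psi_{t}$ of $*$-endomorphisms of $\mathbb{K}(H)$ from $k\mapsto VkV^{*}$ to $\id$.

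The path will come from a strong-operator-continuous path of isometries $V_{t}$ with $V_{0}=V$ and $V_{1}=1$. The main obstacle is that no \emph{norm}-continuous such path exists: $V$ is a non-Fredholm isometry while $1$ is a unitary, so they lie in different norm components of the isometry semigroup of $\mathcal{B}(H)$. However, they can be connected in the strong operator topology: fix an orthonormal basis $\{\xi_{n}\}_{n\ge0}$ of $H$ and, for $t\in[0,1)$, let $V_{t}$ act as the identity on $\mathrm{span}\{\xi_{n}:n<N(t)\}$ and as $V$ on its orthogonal complement, where $N(t)\nearrow\infty$ as $t\to1$; smooth the transitions across integer jumps of $N(t)$ by two-dimensional rotations so that $V_{t}$ remains an isometry throughout. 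A direct verification shows $V_{t}\to1$ in the strong operator topology as $t\to1$.

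Since elements of $\mathbb{K}$ are compact, SOT-convergence $V_{t}\to V_{t_{0}}$ of isometries forces norm-convergence $V_{t}kV_{t}^{*}\to V_{t_{0}}kV_{t_{0}}^{*}$ for each $k\in\mathbb{K}$. Hence $\psi_{t}(k):=V_{t}kV_{t}^{*}$ is point-norm continuous in $t$, and the formula $\Gamma(k)(t)=\psi_{t}(k)$ defines the desired $*$-homomorphism $\mathbb{K}\to C([0,1],\mathbb{K})\cong\mathbb{K}\otimes C[0,1]$ with the correct endpoint evaluations. Tensoring with $\id_{B}$ produces the natural transformation $\gamma$. The crucial ingredient, and the reason the argument works for $\mathbb{K}$ in particular, is this passage from the strong operator topology to norm convergence via compactness---precisely the trick appearing in the cited reference of Jensen--Thomsen.
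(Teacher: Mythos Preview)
Your approach is the standard one the paper has in mind: the paper gives no proof beyond a pointer to Jensen--Thomsen, and what you have written is essentially a sketch of that argument---reduce $\theta\circ(\iota\mathbb{K})$ to $k\mapsto VkV^{*}$ for a proper isometry $V$, build a strongly continuous path of isometries $V_{t}$ from $V$ to $1$, and use compactness of $k$ to pass from strong continuity of $t\mapsto V_{t}$ to norm continuity of $t\mapsto V_{t}kV_{t}^{*}$.

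There is one small gap in your explicit path. Declaring that $V_{t}$ act as the identity on $\mathrm{span}\{\xi_{n}:n<N(t)\}$ and ``as $V$'' on the orthogonal complement only yields an isometry when $V$ maps each tail subspace $\overline{\mathrm{span}}\{\xi_{n}:n\ge N\}$ into itself---equivalently, when $V^{*}$ is lower-triangular in the chosen basis. Your subsequent phrase ``smooth the transitions \ldots\ by two-dimensional rotations'' likewise presupposes this shift-like behaviour. This is true for the unilateral shift but need not hold for the isometry $\eta\mapsto U(\xi_{0}\otimes\eta)$ arising from an arbitrary stability isomorphism $U$. The repair is routine and does not affect your strategy: either invoke directly the standard fact that the semigroup of isometries on an infinite-dimensional $H$ is path-connected (indeed contractible) in the strong operator topology, or first move $V$ to a shift-type isometry along a strongly continuous path before running your construction.
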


\section{Coarse discrete metric spaces}

In this section we remind some standard notions from coarse geometry~\cite{roe_coarse_lectures}
and introduce coarse homotopies, following~\cite{mitchener2020coarse}.

Let $f\colon X\to Y$ be a map between two metric spaces.

\begin{defn}
$f$ is \textit{proper} if $f^{-1}(B)$ is bounded for all bounded
subsets $B\subset Y$.
\end{defn}

\begin{defn}
\label{def:bornologous}$f$ is \textit{bornologous} if for all $N>0$
there is $M>0$ such that if $x,y\in X$ and $\dist(x,y)<N$, then
$\dist(f(x),f(y))<M$.
\end{defn}

\begin{defn}
$f$ is \textit{coarse} if it is proper and bornologous.
\end{defn}

\begin{defn}
Two maps $f,g\colon X\to Y$ are \textit{close} ($f\close g$) if
\[
\sup_{x\in X}\dist(f(x),g(x))<\infty.
\]
\end{defn}

\begin{defn}
A family of maps $\left\{ f_{n}\colon X\to Y\right\} _{n=1}^{\infty}$
is\textit{ equibornologous} if for all $N>0$ there is $M>0$ such
that if $\dist(x,y)<N$, then $\sup_{n}\dist(f_{n}(x),f_{n}(y))<M$.
\end{defn}

\begin{defn}
We denote by $\DMBG$ the category which has as objects countable
discrete metric spaces of bounded geometry and as morphisms coarse
maps.
\end{defn}

\begin{rem}
\label{rem:cdmbg}Denote by $\cDMBG$ the subcategory of $\DMBG$,
obtained by regarding two close maps as the same morphism. Actually,
$\cDMBG$ is just a full subcategory of the category of coarse spaces
spanned by countable discrete metric spaces of bounded geometry with
bounded coarse structure~\cite{roe_coarse_lectures}.
\end{rem}

\begin{defn}
Let $X\in\in\DMBG$, and let $p\colon X\to\mathbb{N}$ be a coarse
map. Define the $p$\textit{-cylinder}
\[
I_{p}X=\left\{ (x,n)\in X\times\mathbb{N}:n\leq p(x)+1\right\} 
\]
with the metrics given by the formula
\[
\dist((x,n),(y,m))=\dist(x,y)+|n-m|.
\]
We also introduce the two inclusions
\begin{align*}
i_{0} & \colon X\to I_{p}X\colon x\mapsto(x,1),\\
i_{1} & \colon X\to I_{p}X\colon x\mapsto(p(x)+1).
\end{align*}
Clearly, $I_{p}X\in\in\DMBG$ and $i_{0},i_{1}\in\DMBG$.\\
\end{defn}

\begin{defn}
Let $f_{0},f_{1}\in\DMBG(X,Y)$. We call them \textit{coarsely homotopic}
(written $f_{0}\crsh f_{1}$) if there is a $p$-cylinder $I_{p}X$
and a coarse map $H\colon I_{p}X\to Y$ making the diagrams
\[
\xymatrix{X\ar[d]_{i_{k}}\ar[rrd]^{f_{k}}\\
I_{p}X\ar[rr]^{H} &  & Y, & \qquad k=0,1
}
\]
commute. The map $H$ is called a \textit{coarse homotopy}.
\end{defn}

\section{Natural transformation induced by coarse maps}

We now move on to study Roe algebra functors as objects of $\hGEFC$.

For a countable metric space $X$ we shall sometimes write $\mathbb{K}_{X}$
instead of $\mathbb{K}$ to emphasize that we use $l_{2}(X)$ as the
Hilbert space in the definition of~$\mathbb{K}$. By $\theta_{X}\colon\mathbb{K}_{X}\mathbb{K}\Rightarrow\mathbb{K}$
we denote a stability isomorphism. We also specify the ``corner''
embedding $\iota_{x_{0}}\colon\Id\Rightarrow\mathbb{K}_{X}$ by the
formula $b\mapsto b\otimes\epsilon_{x_{0},x_{0}}$, where $x_{0}\in X$
is an arbitrary point and $\epsilon_{x_{0},x_{0}}$ is the corresponding
matrix unit.
\begin{defn}
Let $f\in\DMBG(X,Y)$. Consider the natural transformation
\[
f_{*}\colon\mathfrak{M}_{X}^{u}\Rightarrow\mathfrak{M}_{Y}^{u}\mathbb{K}_{X},
\]
defined at object $B$ by the formula
\[
f_{*}B\colon\mat_{x_{1},x_{2}}b_{x_{1},x_{2}}\mapsto\mat_{y_{1},y_{2}}m_{y_{1},y_{2}},
\]
where
\[
m_{y_{1},y_{2}}=\mat_{x_{1},x_{2}\in X}\left(\begin{cases}
b_{x_{1},x_{2}}, & \textrm{if }f(x_{1})=y_{1}\textrm{ and }f(x_{2})=y_{2};\\
0, & \textrm{otherwise.}
\end{cases}\right)
\]
Note that $m_{y_{1},y_{2}}\in\mathbb{K}_{X}B$, for it has only finitely
many nonzero elements due to properness of~$f$.
\end{defn}

\begin{defn}
We denote by $\mathfrak{M}_{f}$ the natural transformation given
by
\[
\mathfrak{M}_{f}\colon\mathfrak{M}_{X}=\mathfrak{M}_{X}^{u}\mathbb{K}\xRightarrow{f_{*}\mathbb{K}}\mathfrak{M}_{Y}^{u}\mathbb{K}_{X}\mathbb{K}\xRightarrow{\mathfrak{M}_{Y}^{u}\theta_{X}}\mathfrak{M}_{Y}^{u}\mathbb{K}=\mathfrak{M}_{Y}.
\]
The homotopy class of $\mathfrak{M}_{f}$ does not depend on the choice
of $\theta_{X}$, because any two such isomorphisms are homotopic.
\end{defn}

\begin{rem}
\label{rem:Mf-prop-incr}Let $m\in\mathfrak{M}_{X}^{u}B$ be a matrix
of finite propagation. Note that if $\prop(m)<N$ then $\prop(\mathfrak{M}_{f}B(m))<M$,
where $N$ and $M$ are the numbers from Definition~\ref{def:bornologous}.
\end{rem}

\begin{defn}
Let $X\in\in\DMBG$ and let $\sigma\colon X\to X$ be a bijection
such that $\sigma\circ\sigma=\id$. Define $\Rot_{\sigma}(t)$ to
be a standard family of (unitary) rotation matrices swapping $x$-th
and $\sigma(x)$-th basis elements in $l_{2}(X)$. In order to explicitly
define it, endow $X$ with an order by enumerating its elements, and
take
\[
\Rot_{\sigma}\colon[0,1]\to\mathbb{B}(l_{2}(X))\colon t\mapsto\mat_{x_{1},x_{2}\in X}\left(\begin{cases}
1, & x_{1}=x_{2}\textrm{ and }\sigma(x_{1})=x_{1};\\
\cos(\pi t/2), & x_{1}=x_{2}\textrm{ and }\sigma(x_{1})\neq x_{1};\\
\sin(\pi t/2), & x_{2}>x_{1}\textrm{ and }\sigma(x_{1})=x_{2};\\
-\sin(\pi t/2), & x_{2}<x_{1}\textrm{ and }\sigma(x_{1})=x_{2};\\
0, & \textrm{otherwise.}
\end{cases}\right).
\]
\end{defn}

\begin{lem}
\label{lem:rot}Let $X$ and $\sigma$ be as in the previous definition.
Then the map $t\mapsto\Rot_{\sigma}(t)$ is strictly continuous. If
in addition $\sigma\close\id$, then $\Rot_{\sigma}$ is norm-continuous
and 
\[
\prop(\Rot_{\sigma})=\sup_{x\in X}\dist(x,\sigma(x)).
\]
\end{lem}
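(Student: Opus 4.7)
The plan is to exploit the block-diagonal structure of $\Rot_{\sigma}(t)$ arising from the decomposition of $X$ into $\sigma$-orbits. Because $\sigma\circ\sigma=\id$, every orbit is either a fixed point $\{x\}$ or a two-element set $\{a,b\}$ with $\sigma(a)=b$, which the chosen enumeration of $X$ orders as $a<b$. A direct inspection of the case analysis in the definition shows that, with respect to the orthogonal decomposition $l_{2}(X)=\bigoplus_{O}l_{2}(O)$, the operator $\Rot_{\sigma}(t)$ restricts to the identity on each fixed-point summand and to the $2\times 2$ rotation
\[
R(t)=\begin{pmatrix}\cos(\pi t/2) & \sin(\pi t/2)\\ -\sin(\pi t/2) & \cos(\pi t/2)\end{pmatrix}
\]
on each transposition summand. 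In particular, different orbits never couple, because an entry $(x_{1},x_{2})$ can be non-zero only when $x_{1}=x_{2}$ or $\sigma(x_{1})=x_{2}$, and both conditions keep $x_{1}$ and $x_{2}$ in the same orbit.

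Once this block description is in place, both continuity statements reduce to a single matrix estimate. For every $\xi\in l_{2}(X)$, the block-diagonal form gives
\[
\norm{(\Rot_{\sigma}(t)-\Rot_{\sigma}(t_{0}))\xi}^{2}\leq\norm{R(t)-R(t_{0})}^{2}\norm{\xi}^{2},
\]
and the right-hand side tends to $0$ as $t\to t_{0}$ since $R$ is a continuous path in $M_{2}(\mathbb{C})$. Combined with the uniform bound $\norm{\Rot_{\sigma}(t)}=1$, this immediately yields strong operator (hence strict) continuity. Taking the supremum over unit vectors $\xi$ upgrades the inequality to $\norm{\Rot_{\sigma}(t)-\Rot_{\sigma}(t_{0})}\leq\norm{R(t)-R(t_{0})}$, which in particular proves the norm continuity claim.

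For the propagation formula, I would just read the non-zero positions off the definition: the entries $(x_{1},x_{2})$ of the matrix $\Rot_{\sigma}$ on which some $t\in[0,1]$ yields a non-zero value are the diagonal positions, of distance $0$, and the pairs with $\sigma(x_{1})=x_{2}\neq x_{1}$, of distance $\dist(x_{1},\sigma(x_{1}))$. Taking the supremum gives $\prop(\Rot_{\sigma})=\sup_{x\in X}\dist(x,\sigma(x))$; the hypothesis $\sigma\close\id$ is used precisely to guarantee that this supremum is finite, so that $\Rot_{\sigma}$ makes sense as a finite-propagation element over $C[0,1]$. I do not anticipate a genuine obstacle beyond the block-decomposition bookkeeping; the only care needed is checking that the sign convention in the off-diagonal entries (depending on the order of $x_{1}$ and $x_{2}$) assembles into the same matrix $R(t)$ on every transposition orbit, which is clear from the definition.
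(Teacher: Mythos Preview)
Your argument is correct, and it is in fact more explicit and slightly more general than what the paper does. The paper's own proof is essentially two sentences: it declares strict continuity ``obvious'' and then deduces norm continuity from Lemma~\ref{lem:norm-ineq}, applying the estimate $\norm{m}\leq N\sup_{x,y}\norm{b_{x,y}}$ to the difference $\Rot_{\sigma}(t)-\Rot_{\sigma}(t_{0})$, whose entrywise sup tends to $0$ while its propagation stays bounded by $\sup_{x}\dist(x,\sigma(x))$. That route genuinely uses both the bounded-geometry hypothesis on $X$ and the finiteness of the propagation, i.e.\ the assumption $\sigma\close\id$. Your block-diagonal argument bypasses Lemma~\ref{lem:norm-ineq} entirely: since every non-trivial block is the \emph{same} $2\times 2$ matrix $R(t)$, the operator norm of the difference is exactly $\norm{R(t)-R(t_{0})}$, and norm continuity follows with no hypothesis on $\sigma$ or on the geometry of $X$ at all. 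You correctly isolate the role of $\sigma\close\id$ as making the propagation finite, nothing more. The trade-off is that the paper's approach is shorter and reuses an estimate already in place, while yours is self-contained and explains structurally why the rotation path is so well behaved. One cosmetic point: the implication ``strong operator $\Rightarrow$ strict'' for a bounded path of unitaries also needs strong continuity of the adjoints, but your block description gives that identically (the adjoint blocks are $R(t)^{*}$), and in any case your norm-continuity estimate already subsumes it.
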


\begin{proof}
The first part of the lemma is obvious. The second part follows from
Lemma~\ref{lem:norm-ineq}.
\end{proof}
\begin{lem}
\label{lem:dmbg-functoriality}The correspondence $f\mapsto\mathfrak{M}_{f}$
is a functor from $\DMBG$ to $\hGEFC$.
\end{lem}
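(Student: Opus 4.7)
The plan is to verify the two functor axioms modulo the homotopy relation in $\hGEFC$: preservation of identities, $\mathfrak{M}_{\id_{X}}\simeq\id_{\mathfrak{M}_{X}}$, and preservation of composition, $\mathfrak{M}_{g\circ f}\simeq\mathfrak{M}_{g}\circ\mathfrak{M}_{f}$, for composable $f\in\DMBG(X,Y)$ and $g\in\DMBG(Y,Z)$.

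For the identity axiom, I would first unpack $\mathfrak{M}_{\id_{X}}B$ to see that it acts on an element $\mat_{x_{1},x_{2}}k_{x_{1},x_{2}}\in\mathfrak{M}_{X}^{u}\mathbb{K}B$ entry-wise via $k_{y_{1},y_{2}}\mapsto\theta_{X}B(k_{y_{1},y_{2}}\otimes\epsilon_{y_{1},y_{2}})$; that is, $(\id_{X})_{*}\mathbb{K}$ is a position-dependent ``diagonal'' corner embedding into the $\mathbb{K}_{X}$-factor. Compare with the constant-corner composite $\mathfrak{M}_{X}^{u}(\theta_{X}\circ\iota_{x_{0}}\mathbb{K})$, which by Lemma~\ref{lem:corner} applied to $\mathbb{K}_{X}\cong\mathbb{K}$ is homotopic to $\id_{\mathfrak{M}_{X}}$. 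It then suffices to interpolate between the diagonal and constant corner embeddings, which is implemented by conjugation with a block-diagonal unitary $D=\bigoplus_{y\in X}\tau_{y}$ on $l_{2}(X)\otimes l_{2}(X)$, where $\tau_{y}$ transposes $e_{x_{0}}$ and $e_{y}$, so that $\mathrm{Ad}(D)$ sends $(\cdot)\otimes\epsilon_{x_{0},x_{0}}$ to $(\cdot)\otimes\epsilon_{y,y}$ on the $y$-th outer block. A norm-continuous path $D(t)$ from $1$ to $D$ is given by $\tau_{y}(t)=\exp(i\pi t(1-\tau_{y})/2)$; since $\lVert 1-\tau_{y}\rVert\leq 2$ uniformly in $y$, the family $D(t)=\bigoplus_{y}\tau_{y}(t)$ is norm-continuous, and $\mathrm{Ad}(D(t))$ implements the desired homotopy inside $\mathfrak{M}_{X}^{u}\mathbb{K}_{X}$.

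For the composition axiom, I would unravel both sides: $\mathfrak{M}_{gf}$ is $\mathfrak{M}_{Z}^{u}\theta_{X}\circ(gf)_{*}\mathbb{K}$, while by naturality of $g_{*}$ the composite $\mathfrak{M}_{g}\circ\mathfrak{M}_{f}$ rewrites as $\mathfrak{M}_{Z}^{u}(\theta_{Y}\circ\mathbb{K}_{Y}\theta_{X})\circ g_{*}\mathbb{K}_{X}\mathbb{K}\circ f_{*}\mathbb{K}$. By Lemma~\ref{lem:corner} applied to $\mathbb{K}_{Y}$, the stabilization $\theta_{Y}\circ\mathbb{K}_{Y}\theta_{X}\circ\iota_{y_{0}}(\mathbb{K}_{X}\mathbb{K})$ is homotopic to $\theta_{X}$, so whiskering reduces the comparison to showing $\mathfrak{M}_{Z}^{u}(\iota_{y_{0}}(\mathbb{K}_{X}\mathbb{K}))\circ(gf)_{*}\mathbb{K}\simeq g_{*}\mathbb{K}_{X}\mathbb{K}\circ f_{*}\mathbb{K}$. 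Both sides produce, at each $(z_{1},z_{2})$, a sum over $\{(x_{1},x_{2}):gf(x_{i})=z_{i}\}$ of elements in $\mathbb{K}_{Y}\mathbb{K}_{X}\mathbb{K}B$, differing only in the $\mathbb{K}_{Y}$-decoration (constant $\epsilon_{y_{0},y_{0}}$ on the left versus varying $\epsilon_{f(x_{1}),f(x_{2})}$ on the right). As in the identity case, these are related by $\mathrm{Ad}(V)$ for the block-diagonal unitary $V=\sum_{x}W_{x}\otimes\epsilon_{x,x}\otimes 1$ with $W_{x}$ swapping $e_{y_{0}}$ and $e_{f(x)}$, and $V$ is homotopic to $1$ through unitaries with uniformly bounded generators.

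The main obstacle will be the identity axiom: one must verify that $\mathrm{Ad}(D(t))$ preserves the uniform Roe algebra (granted by block-diagonality of $D(t)$ in the outer matrix index, which commutes with finite propagation) and yields a uniformly norm-continuous family of $*$-homomorphisms (granted by uniform boundedness of the generators in $y$). The composition axiom, though notationally heavier, reduces by analogous conjugation-by-unitary homotopies and the standard coherence of stability isomorphisms via Lemma~\ref{lem:corner}.
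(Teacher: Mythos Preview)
Your proposal is correct and follows essentially the same route as the paper: both reduce the comparison via Lemma~\ref{lem:corner} to showing that the ``variable'' corner embedding (placing $b_{x_1,x_2}$ at position $\epsilon_{f(x_1),f(x_2)}$) is homotopic to the ``constant'' one (position $\epsilon_{y_0,y_0}$), and both implement this by conjugation with a path of unitaries built from transpositions indexed by $x\in X$. The only cosmetic differences are that the paper packages the conjugating unitary as $\Rot_{\sigma}$ for the bijection $\sigma$ swapping $(f(x),x)\leftrightarrow(y_0,x)$ on $Y\times X$ and organizes the reduction as a single commutative diagram, whereas you write the unitary as a block-diagonal exponential and carry out the reduction algebraically via the interchange law; for the identity axiom the paper simply says ``analogous argument,'' which your block-diagonal $D=\bigoplus_y\tau_y$ makes explicit.
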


\begin{proof}
Let $f\in\DMBG(X,Y)$, $g\in\DMBG(Y,Z)$, and let $y_{0}$ be an arbitrary
point in $Y$. To prove that $\mathfrak{M}_{g\circ f}\simeq\mathfrak{M}_{g}\circ\mathfrak{M}_{f}$
it suffices to check that the following diagram commutes in $\hGEFC$:
\[
\xymatrix{ &  & \mathfrak{M}_{X}^{u}\mathbb{K}\ar@2[ddll]_{f_{*}\mathbb{K}}\ar@2[ddrr]^{(g\circ f)_{*}\mathbb{K}}\\
\\
\mathfrak{M}_{Y}^{u}\mathbb{K}_{X}\mathbb{K}\ar@2[rr]^{g_{*}\mathbb{K}_{X}\mathbb{K}}\ar@2[dd]^{\mathfrak{M}_{Y}^{u}\theta_{X}} &  & \mathfrak{M}_{Z}^{u}\mathbb{K}_{Y}\mathbb{K}_{X}\mathbb{K}\ar@2[dd]^{\mathfrak{M}_{Z}^{u}\mathbb{K}_{Y}\theta_{X}} &  & \mathfrak{M}_{Z}^{u}\mathbb{K}_{X}\mathbb{K}\ar@2[ll]_{\mathfrak{M}_{Z}^{u}\iota_{y_{0}}\mathbb{K}_{X}\mathbb{K}}\ar@2[dd]^{\mathfrak{M}_{Z}^{u}\theta_{X}}\\
\\
\mathfrak{M}_{Y}^{u}\mathbb{K}\ar@2@2[rr]^{g_{*}\mathbb{K}} &  & \mathfrak{M}_{Z}^{u}\mathbb{K}_{Y}\mathbb{K}\ar@2[rrdd]_{\mathfrak{M}_{Z}^{u}\theta_{Y}} &  & \mathfrak{M}_{Z}^{u}\mathbb{K}\ar@2[ll]_{\mathfrak{M}_{Z}^{u}\iota_{y_{0}}\mathbb{K}}\ar@{=}[dd]\\
\\
 &  &  &  & \mathfrak{M}_{Z}^{u}\mathbb{K}.
}
\]
Commutativity of the two rectangles in the middle is obvious. The
lower triangle commutes by Lemma~\ref{lem:corner}. It remains only
to prove that the upper part of the diagram commutes.

To this end let us consider the bijection $\sigma\colon Y\times X\to Y\times X$
swapping the points $(f(x),x)$ and $(y_{0},x)$ for all $x\in X$.
By the first part of Lemma~\ref{lem:rot} we have a strictly continuous
path $\Rot_{\sigma}$ in the multipliers of $\mathbb{K}_{Y}\mathbb{K}_{X}\mathbb{K}$,
which yields
\[
\mathfrak{M}_{Z}^{u}\mathbb{K}_{Y}\mathbb{K}_{X}\mathbb{K}\Rightarrow\mathfrak{M}_{Z}^{u}I\mathbb{K}_{Y}\mathbb{K}_{X}\mathbb{K}\colon\mat_{z_{1},z_{2}}k_{z_{1},z_{2}}\mapsto\mat_{z_{1},z_{2}}[t\mapsto\Rot_{\sigma}(t)k_{z_{1},z_{2}}\Rot_{\sigma}(t)^{*}].
\]
The homotopy $\mathfrak{M}_{\id}\simeq\id$ is obtained by an analogous
argument.
\end{proof}
\begin{lem}
\label{lem:close-to-homotopy}Let $f,g\in\DMBG(X,Y)$ such that $f\close g$.
Then there is a homotopy $\eta\colon\mathfrak{M}_{X}\Rightarrow\mathfrak{M}_{Y}I$
connecting $\mathfrak{M}_{f}$ and $\mathfrak{M}_{g}$ such that
\begin{equation}
\prop(\eta B(m))\leq\prop(\mathfrak{M}_{f}B(m))+2\sup_{x\in X}\dist(f(x),g(x)).\label{eq:homotopy-prop}
\end{equation}
\end{lem}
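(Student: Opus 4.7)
The plan is to realise $\eta$ by conjugating $f_{*}\mathbb{K}B$ with a norm-continuous path of unitaries running from the identity (so the conjugation is trivial and we recover $\mathfrak{M}_{f}$) to a permutation on $l_{2}(Y\times X)$ that carries the support of $f_{*}B(m)$ onto the support of $g_{*}B(m)$ (so we land on $\mathfrak{M}_{g}$). First I would introduce the involution $\sigma\colon Y\times X\to Y\times X$ swapping $(f(x),x)$ with $(g(x),x)$ for every $x\in X$ and fixing the remaining points; this is well-defined since for distinct $x$ the two swap pairs lie in disjoint slices $Y\times\{x\}$. Setting $d\coloneqq\sup_{x\in X}\dist(f(x),g(x))$, the hypothesis $f\close g$ gives $d<\infty$ and $\sigma\close\id$, so Lemma~\ref{lem:rot} provides a norm-continuous unitary path $\Rot_{\sigma}\colon[0,1]\to\mathbb{B}(l_{2}(Y\times X))$ whose nonzero entries $((y_{1},x_{1}),(y_{2},x_{2}))$ satisfy $x_{1}=x_{2}$ and $\dist(y_{1},y_{2})\leq d$.

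I would then set
\[
[\eta B(m)](t)=\mathfrak{M}_{Y}^{u}\theta_{X}\bigl(W(t)\cdot f_{*}\mathbb{K}B(m)\cdot W(t)^{*}\bigr),
\]
where $W(t)$ is a norm-continuous unitary path on $l_{2}(Y\times X)$ (regarded in the multipliers of $\mathbb{K}_{Y}\mathbb{K}_{X}\mathbb{K}$) with $W(0)=I$ and $W(1)$ equal to the \emph{unsigned} permutation $e_{a}\mapsto e_{\sigma(a)}$. Taking $W(t)=\Rot_{\sigma}(t)$ is not quite enough: $\Rot_{\sigma}(1)$ is a signed permutation (the $\pi/2$-rotation on each swap block introduces a sign of $-1$ on one of the two transposed basis vectors), so conjugation by it sends the matrix unit $E_{(f(x_{1}),x_{1}),(f(x_{2}),x_{2})}$ to $\epsilon(x_{1})\epsilon(x_{2})\,E_{(g(x_{1}),x_{1}),(g(x_{2}),x_{2})}$ with signs $\epsilon(x)\in\{\pm1\}$ coming from the chosen ordering of $Y\times X$. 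I would cancel these signs by setting $W(t)=D(t)\Rot_{\sigma}(t)$, where $D$ is the self-adjoint diagonal unitary on $l_{2}(Y\times X)$ whose $\pm1$ entries are chosen so that $D\Rot_{\sigma}(1)$ equals the unsigned permutation, and $D(t)\coloneqq\exp(it\log D)$ (principal branch) is a norm-continuous interpolation from $I$ to $D$ through diagonal, propagation-zero unitaries.

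With this choice, naturality of $\eta$ in $B$ is automatic since $W(t)$ does not depend on $B$; at $t=0$ conjugation is trivial, so $[\eta B(m)](0)=\mathfrak{M}_{f}B(m)$; and at $t=1$ the operator $W(1)=D\Rot_{\sigma}(1)$ is the unsigned permutation implementing $\sigma$, whose conjugation sends $f_{*}\mathbb{K}B(m)$ to $g_{*}\mathbb{K}B(m)$, giving $[\eta B(m)](1)=\mathfrak{M}_{g}B(m)$. The propagation bound~\eqref{eq:homotopy-prop} follows from subadditivity of $\prop$ under products (bounding the $Y$-propagation of $W(t)\cdot A\cdot W(t)^{*}$ by $\prop(A)+2d$ for $A=f_{*}\mathbb{K}B(m)$), together with the fact that $\mathfrak{M}_{Y}^{u}\theta_{X}$ acts entrywise and therefore preserves propagation. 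The main technical obstacle is the sign bookkeeping at $t=1$ that forces the introduction of the auxiliary diagonal path $D(t)$; beyond that, the construction is essentially the rotation argument used in Lemma~\ref{lem:dmbg-functoriality}.
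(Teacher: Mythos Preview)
Your approach is the paper's: define $\sigma$ on $Y\times X$ by swapping $(f(x),x)$ with $(g(x),x)$, invoke Lemma~\ref{lem:rot} to get the norm-continuous unitary path $\Rot_\sigma$ with $\prop(\Rot_\sigma)\le d$, conjugate $f_*\mathbb{K}B(m)$ by it, push through $\mathfrak{M}_Y^u\theta_X$, and read off the propagation bound from subadditivity of $\prop$ under products (Remark~\ref{rem:propmult} is stated multiplicatively, but the intended and correct bound is additive, and that is what you use).

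The one place you go beyond the paper is the sign bookkeeping at $t=1$. Your observation is correct: $\Rot_\sigma(1)$ is a \emph{signed} permutation, so conjugation introduces factors $\epsilon(x_1)\epsilon(x_2)$. Your repair by multiplying with a diagonal path $D(t)$ works, though the formula $\exp(it\log D)$ is not quite right: $\log D$ is skew-adjoint, so that expression is a positive operator rather than a unitary. Write instead $D(t)=e^{itA}$ with $A$ diagonal self-adjoint and $e^{iA}=D$, or simply rotate each diagonal entry from $1$ to $\pm 1$ along the unit circle; either way $D(t)$ has propagation zero and the estimate is unaffected. The correction is in fact avoidable: the definition of $\Rot_\sigma$ lets you choose the enumeration of $Y\times X$, and if you arrange that $(f(x),x)<(g(x),x)$ whenever $f(x)\ne g(x)$ then all $\epsilon(x)$ carry the same sign, the products $\epsilon(x_1)\epsilon(x_2)$ are identically $+1$, and conjugation by $\Rot_\sigma(1)$ already sends $f_*\mathbb{K}B(m)$ to $g_*\mathbb{K}B(m)$ on the nose. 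This is presumably what the paper intends, and it also keeps Remark~\ref{rem:constant} transparent.
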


\begin{proof}
Let $\sigma\colon Y\times X\to Y\times X$ be a bijection swapping
the points $(f(x),x)$ and $(g(x),x)$ for all $x\in X$. We apply
the second part of Lemma \ref{lem:rot} to obtain a norm-continuous
path of unitaries $\Rot_{\sigma}$ such that
\begin{equation}
\prop(\Rot_{\sigma})\leq\sup_{x\in X}\dist(f(x),g(x)).\label{eq:rotprop}
\end{equation}
We use Axiom~\ref{enu:h3} to derive the required homotopy from the
natural transformation
\[
\mathfrak{M}_{X}\Rightarrow I\mathfrak{M}_{Y}\colon m\mapsto[t\mapsto\Rot_{\sigma}(t)m\Rot_{\sigma}(t)^{*}].
\]
The inequality~(\ref{eq:homotopy-prop}) holds due to Remark~\ref{rem:propmult}
and inequality~(\ref{eq:rotprop}).
\end{proof}
\begin{rem}
\label{rem:constant}Let $m\in\mathfrak{M}_{X}B$, $y_{1},y_{2}\in Y$
and let $\eta\colon\mathfrak{M}_{X}\Rightarrow\mathfrak{M}_{Y}I$
be the homotopy from the previous lemma. It is easy to see that if
$f(x)=g(x)$ for all 
\[
x\in f^{-1}\{y_{1},y_{2}\}\cup g^{-1}\{y_{1},y_{2}\},
\]
then the matrix element $(\eta B(m))_{y_{1},y_{2}}$ is a constant
function.
\end{rem}

Let $\cDMBG$ be the category from Remark~\ref{rem:cdmbg}. We have
just proved the following theorem.
\begin{thm}
The correspondence $f\mapsto\mathfrak{M}_{f}$ is a functor from $\cDMBG$
to $\hGEFC$.
\end{thm}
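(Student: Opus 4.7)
The plan is to observe that this theorem is essentially a repackaging of the two preceding lemmas. By Lemma~\ref{lem:dmbg-functoriality}, the assignment $f\mapsto\mathfrak{M}_{f}$ is already known to be a functor $\DMBG\to\hGEFC$, so the only thing to check is that it descends to the quotient category $\cDMBG$ described in Remark~\ref{rem:cdmbg}. By the universal property of the quotient, this amounts to verifying that $f\close g$ implies $\mathfrak{M}_{f}$ and $\mathfrak{M}_{g}$ represent the same morphism in $\hGEFC$, i.e.\ are homotopic as natural transformations.

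But this is precisely what Lemma~\ref{lem:close-to-homotopy} supplies: given $f\close g$, it constructs a homotopy $\eta\colon\mathfrak{M}_{X}\Rightarrow\mathfrak{M}_{Y}I$ connecting $\mathfrak{M}_{f}$ and $\mathfrak{M}_{g}$ via a rotation path $\Rot_{\sigma}$ swapping the points $(f(x),x)$ and $(g(x),x)$. Thus, under the identification $f\close g$, the two natural transformations $\mathfrak{M}_{f},\mathfrak{M}_{g}\in\hGEFC(\mathfrak{M}_{X},\mathfrak{M}_{Y})$ coincide, so the assignment is well defined on morphisms of $\cDMBG$.

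Functoriality (preservation of identities and composition) is then inherited directly from Lemma~\ref{lem:dmbg-functoriality}, since the quotient functor $\DMBG\to\cDMBG$ is the identity on objects and surjective on morphisms. I expect no real obstacle here: the substantive work was carried out in Lemmas~\ref{lem:dmbg-functoriality} and~\ref{lem:close-to-homotopy}, and this theorem is the natural conclusion formed by combining them via the universal property of the quotient category.
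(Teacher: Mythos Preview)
Your proposal is correct and matches the paper's approach exactly: the paper simply states ``We have just proved the following theorem'' after Lemmas~\ref{lem:dmbg-functoriality} and~\ref{lem:close-to-homotopy}, treating the result as an immediate consequence of those two lemmas. Your write-up spells out the quotient-category reasoning more explicitly, but the substance is the same.
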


The proof of homotopy invariance of this functor relies on the following
technical lemma.
\begin{lem}
\label{lem:family}Let $H\colon I_{p}X\to Y$ be a coarse homotopy
between $f$ and $g$. Then there is a family of coarse maps $\left\{ f_{n}\colon X\to Y\right\} _{n\in\mathbb{N}}$
such that the following properties hold:
\begin{enumerate}
\item \label{enu:unibor}the family $\left\{ f_{n}\right\} $ is equibornologous;
\item \label{enu:close}$\sup_{n\in\mathbb{N}}\sup_{x\in X}\dist(f_{n}(x),f_{n+1}(x))<\infty$;
\item \label{enu:stationary}for all $x\in X$ there is $N\in\mathbb{N}$
such that $f_{n}(x)=f_{N}(x)$ for all $n\geq N$;
\item \label{enu:eventually}for every $y\in Y$ there are only finitely
many $x\in X$ such that $\{f_{n}(x)\}_{n\in\mathbb{N}}\cap H^{-1}\{y\}\neq\text{Ø}$;
\item \label{enu:coarselim}$f_{1}=f$ and $\lim_{n\to\infty}f_{n}(x)=g$.
\end{enumerate}
\end{lem}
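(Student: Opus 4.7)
\bigskip

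The plan is to take $f_n(x) \coloneqq H(x, \min(n, p(x)+1))$, which is essentially the obvious time-discretized restriction of the homotopy $H$ to its $n$-th horizontal slice (capped by the ceiling $p(x)+1$ so that it lies in $I_pX$). Properties~(\ref{enu:coarselim}) are immediate from the definition: $f_1(x) = H(x,1) = H(i_0(x)) = f(x)$, and for each fixed $x$ as soon as $n \ge p(x)+1$ we have $f_n(x) = H(x, p(x)+1) = H(i_1(x)) = g(x)$, which simultaneously gives the pointwise stabilization~(\ref{enu:stationary}).

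Next I would verify the equibornologous bound~(\ref{enu:unibor}). Fix $N > 0$ and suppose $\dist(x,y) < N$. Since $p\colon X \to \mathbb{N}$ is coarse, hence bornologous, there is $M_1$ (depending on $N$, not on $n$) with $|p(x) - p(y)| < M_1$, and therefore $|\min(n, p(x)+1) - \min(n, p(y)+1)| < M_1$ as well, so the $I_pX$-distance between $(x, \min(n,p(x)+1))$ and $(y, \min(n, p(y)+1))$ is bounded by $N + M_1$ independently of $n$. Bornology of $H$ then yields a uniform bound on $\dist(f_n(x), f_n(y))$. Property~(\ref{enu:close}) is verified by the same device applied to the single point $x$: the $I_pX$-distance between $(x, \min(n, p(x)+1))$ and $(x, \min(n+1, p(x)+1))$ is at most $1$, and bornology of $H$ gives a uniform bound on $\dist(f_n(x), f_{n+1}(x))$. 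Each $f_n$ is itself coarse: bornology is just the $n$-fixed case of the argument above, and properness of $f_n$ follows from properness of $H$ together with the fact that the projection $I_pX \to X$ is proper.

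Property~(\ref{enu:eventually}) is where the coarse hypotheses on $H$ really come into play. If $\{f_n(x)\}_n \cap H^{-1}\{y\} \neq \emptyset$ then by construction there is some $m \in \{1, \ldots, p(x)+1\}$ with $H(x,m) = y$, so $(x,m) \in H^{-1}\{y\}$. Since $H$ is proper and $\{y\}$ is bounded in $Y$, the preimage $H^{-1}\{y\}$ is a bounded subset of $I_pX$; as $I_pX$ is a discrete metric space of bounded geometry, bounded sets are finite, so only finitely many $x \in X$ can occur as the first coordinate of points in $H^{-1}\{y\}$.

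The only mildly delicate step is~(\ref{enu:eventually}); the remaining items are formal consequences of the coarse homotopy axioms once one observes that the bornology constants produced by $p$ and by $H$ do not depend on the time parameter $n$. I do not anticipate any serious obstacle beyond bookkeeping the quantifier order in the bornologous estimates.
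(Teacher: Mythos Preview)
Your proposal is correct and follows exactly the paper's approach: the paper also sets $f_n(x)=H(x,\min(n,p(x)+1))$ and verifies the five properties in the same way, though more tersely (it simply attributes \ref{enu:close} and \ref{enu:eventually} to $H$ being coarse, whereas you spell out the distance-$1$ step and the properness argument). One cosmetic remark: for the properness of each $f_n$ you want that the projection $I_pX\to X$ sends bounded sets to bounded sets (it is $1$-Lipschitz), not that it is proper; but the conclusion is unaffected.
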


\begin{proof}
We define $f_{n}$ by the formula
\[
f_{n}\colon X\to Y\colon x\mapsto H(x,\min(n,p(x)+1)).
\]
Bearing in mind that $p$ is bornologous, we use the inequality
\begin{multline*}
\dist\left((x,\min(n,p(x)+1)),(y,\min(n,p(y)+1))\right)=\\
\dist(x,y)+\dist\left(\min(n,p(x)+1),\min(n,p(y)+1)\right)\leq\dist(x,y)+|p(x)-p(y)|
\end{multline*}
to show that~\ref{enu:unibor} holds. Properties~\ref{enu:close}
and~\ref{enu:eventually} follow from the fact that $H$ is a coarse
map. Property~\ref{enu:stationary} is obvious. Note that $f_{1}$
and $\lim_{n\to\infty}f_{n}(x)$ are the restrictions of $H$ to the
upper and lower faces of $I_{p}$, respectively, so~\ref{enu:coarselim}
also holds.
\end{proof}
\begin{thm}
\label{thm:homotopic2homotopic}If $f\crsh g\colon X\to Y$, then
$\mathfrak{M}_{f}\simeq\mathfrak{M}_{g}$. 
\end{thm}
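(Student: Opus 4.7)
The plan is to use the interpolating sequence from Lemma~\ref{lem:family} to subdivide the coarse homotopy into countably many close steps, apply Lemma~\ref{lem:close-to-homotopy} to each step, and concatenate the resulting pairwise homotopies into a single homotopy $\eta\colon\mathfrak{M}_X\Rightarrow\mathfrak{M}_Y I$ connecting $\mathfrak{M}_f=\mathfrak{M}_{f_1}$ and $\mathfrak{M}_g$.

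First I would check that each $f_n$ produced by Lemma~\ref{lem:family} is a coarse map (bornologicity from~\ref{enu:unibor}, properness from~\ref{enu:eventually}). Property~\ref{enu:close} then gives a uniform-in-$n$ bound $C$ on $\sup_x \dist(f_n(x), f_{n+1}(x))$, so Lemma~\ref{lem:close-to-homotopy} supplies homotopies $\eta_n\colon\mathfrak{M}_X\Rightarrow\mathfrak{M}_Y I$ between $\mathfrak{M}_{f_n}$ and $\mathfrak{M}_{f_{n+1}}$. Combining equibornologicity with Remark~\ref{rem:Mf-prop-incr}, the propagation bound~(\ref{eq:homotopy-prop}) depends only on $\prop(m)$ and $C$, so $\prop(\eta_n B(m))$ is uniformly controlled in~$n$.

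Next I would pick an increasing sequence $0 = s_0 < s_1 < \cdots \to 1$ and define a candidate $\eta B(m)\colon[0,1]\to\mathfrak{M}_Y B$ piecewise: on $[s_{n-1}, s_n]$ it is a linear reparametrization of $\eta_n B(m)$, at each junction $s_n$ both pieces produce $\mathfrak{M}_{f_{n+1}}B(m)$, and at $t = 1$ we set $\eta B(m)(1):=\mathfrak{M}_g B(m)$. The restriction to $[0,1)$ is automatically norm-continuous with propagation bounded uniformly in $t$. The main obstacle is norm-continuity at $t = 1$, which is what ensures that the candidate lies in $\mathfrak{M}_Y I B$. I would handle this first for matrices $m$ with only finitely many nonzero entries: Properties~\ref{enu:stationary} and~\ref{enu:eventually} together confine the supports of all $\mathfrak{M}_{f_n}B(m)$ to a common finite subset of $Y\times Y$, and applying Remark~\ref{rem:constant} to each of the finitely many matrix entries shows that every entry of $\eta_n B(m)$ is a constant function for $n$ past some threshold, with common value $\mathfrak{M}_g B(m)_{y_1,y_2}$, so $\eta B(m)$ is eventually constant near $t=1$. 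Extending to arbitrary $m\in\mathfrak{M}_X B$ is the technical heart: the tactic I would try is to exploit that each $\eta_n B$ is a $*$-homomorphism (hence contractive), so the subset of $\mathfrak{M}_X B$ on which the candidate is norm-continuous is a closed $*$-subalgebra, and then to verify that the finite-support matrices generate enough of $\mathfrak{M}_X B$ to fill it out.

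Finally, naturality in $B$ is inherited from the components $\eta_n$, and the endpoint identities $G\ev_0 \circ \eta = \mathfrak{M}_f$ (using $f_1 = f$ from~\ref{enu:coarselim}) and $G\ev_1 \circ \eta = \mathfrak{M}_g$ follow from the construction.
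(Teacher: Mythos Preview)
Your overall strategy matches the paper's: use Lemma~\ref{lem:family} to break the coarse homotopy into uniformly close steps, apply Lemma~\ref{lem:close-to-homotopy} to each, and concatenate. The uniform propagation control you extract from~\ref{enu:unibor},~\ref{enu:close}, and~(\ref{eq:homotopy-prop}) is exactly what the paper uses to pass from $C_b\otimes\mathfrak{M}_Y$ to $\mathfrak{M}_Y C_b$, and the reparametrization by $[0,1]$ rather than $[1,\infty]$ is cosmetic.

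The gap is in your final density step. Matrices with only finitely many nonzero entries do \emph{not} generate $\mathfrak{M}_X B$: they form a proper closed ideal, and for instance the identity matrix (propagation zero) lies outside their closure whenever $X$ is infinite. So even though the set of $m$ on which your candidate $\eta B(m)$ is norm-continuous at $t=1$ is indeed a closed $*$-subalgebra, you cannot fill out $\mathfrak{M}_X B$ from finite-support matrices. Your eventual-constancy argument for finite-support $m$ is correct, but it genuinely relies on the support of $\mathfrak{M}_{f_n}B(m)$ being confined to a fixed finite subset of $Y\times Y$, which fails for finite-propagation $m$ of infinite support.

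The paper avoids this by never asking for norm convergence of the whole operator at once. Instead it fixes a single pair $(y_1,y_2)\in Y\times Y$ and shows, using~\ref{enu:eventually} and~\ref{enu:stationary} together with Remark~\ref{rem:constant}, that the $(y_1,y_2)$-entry of $\eta B(m)$ is an eventually constant function --- and this entry-wise argument works for \emph{every} finite-propagation $m$, regardless of its support in $X$. Since such $m$ map to matrices of uniformly bounded propagation in $Y$ (your second paragraph) with all entries in $I\mathbb{K}B$, the image lies in $L(I\mathbb{K}B,Y)\subset\mathfrak{M}_Y I B$; and finite-propagation matrices \emph{are} dense in $\mathfrak{M}_X B$, so continuity of the $*$-homomorphism finishes the job. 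Replacing ``finite support'' by ``finite propagation'' and switching from global norm-continuity at $t=1$ to this entry-wise check repairs your argument.
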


\begin{proof}
Let $\{f_{n}\}$ be a sequence of maps given by Lemma~\ref{lem:family}.
From~\ref{enu:close} we deduce that $f_{n}\close f_{n+1}$ for all
$n$. Using this we get a family of natural transformations $\{\eta_{n}\colon\mathfrak{M}_{X}\Rightarrow I\mathfrak{M}_{Y}\}_{n\in\mathbb{N}}$
as in the proof of Lemma~\ref{lem:close-to-homotopy} such that $\ev_{0}\circ\eta_{n}=\mathfrak{M}_{f_{n}}$
and $\ev_{1}\circ\eta_{n}=\mathfrak{M}_{f_{n+1}}$. Putting them together
in an obvious way, we obtain the natural transformation
\[
\eta\colon\mathfrak{M}_{X}\Rightarrow C_{b}[1,+\infty)\otimes\mathfrak{M}_{Y},
\]
where $C_{b}[1,+\infty)$ is the $C^{*}$-algebra of bounded functions
on $[1,+\infty)$. By Properties~\ref{enu:unibor}, \ref{enu:close},
Remark~\ref{rem:Mf-prop-incr}, and Lemma~\ref{lem:close-to-homotopy}
we may deduce that actually
\[
\eta\colon\mathfrak{M}_{X}\Rightarrow\mathfrak{M}_{Y}C_{b}[1,+\infty).
\]
To see that $\eta$ is the required homotopy it suffices to check
that for all $B\in\in\Cstar$ and $m\in\mathfrak{M}_{X}B$, all the
elements of the matrix $\eta B(m)$ are functions constant at infinity.

Let $y_{1},y_{2}\in Y$. We use Properties~\ref{enu:eventually},~\ref{enu:stationary}
to find a finite subset $X'\subset X$ and $N\in\mathbb{N}$ such
that for all $n\geq N$
\begin{enumerate}
\item $f_{n}(x)=f_{N}(x)$ for all $x\in X'$;
\item $f_{n}(x)\in\{y_{1},y_{2}\}$ iff $x\in X'$.
\end{enumerate}
From Remark~\ref{rem:constant} it follows that the function $t\mapsto(\eta B(m))_{y_{1},y_{2}}(t)$
is constant for $t\geq N$.
\end{proof}

\end{document}